\newcommand{\gw}{\Omega}
\newcommand{\ga}{\gamma}
\newcommand{\gb}{\beta}
\newcommand{\G}{\Gamma}
\newcommand{\gl}{\lambda}
\newcommand{\gs}{\sigma}
\newcommand{\mb}{\mathbb}
\newcommand{\ms}{\mathscr}
\newcommand{\vp}{\varphi}
\newcommand{\ve}{\varepsilon}
\newcommand{\beq}{\begin{equation}}
\newcommand{\eeq}{\end{equation}}
\newcommand{\bea}{\begin{align}}
\newcommand{\eea}{\end{align}}
\newcommand{\bthm}{\begin{theorem}}
\newcommand{\ethm}{\end{theorem}}
\newcommand{\bpr}{\begin{proof}}
\newcommand{\epr}{\end{proof}}
\newcommand{\bcl}{\begin{corollary}}
\newcommand{\ecl}{\end{corollary}}
\newcommand{\bpn}{\begin{proposition}}
\newcommand{\epn}{\end{proposition}}
\newcommand{\bre}{\begin{remark}}
\newcommand{\ere}{\end{remark}}
\newcommand{\bdf}{\begin{definition}}
\newcommand{\edf}{\end{definition}}
\newcommand{\bss}{\begin{align*}}
\newcommand{\ess}{\end{align*}}
\newcommand{\bl}{\label}
\newtheorem{theorem}{Theorem}[section]
\newtheorem{corollary}[theorem]{Corollary}
\newtheorem{proposition}[theorem]{Proposition}
\theoremstyle{definition}
\newtheorem{definition}[theorem]{Definition}
\theoremstyle{remark}
\newtheorem{remark}{Remark}
\numberwithin{equation}{section}
\begin{document}

\title[Synchronization of Memristive Hopfield Neural Networks]{Approximate Synchronization of Memristive Hopfield Neural Networks}

\author[Y. You]{Yuncheng You}
\address{University of South Florida, Tampa, FL 33620, USA}
\email{yygwmp@gmail.com}

\thanks{}

\subjclass[2020]{34D06, 34D45, 37N25, 68T07, 92B20}

\date{\today}


\keywords{Hopfield neural network, approximate synchronization, memristive coupling, Hebbian weight dynamics, network exponential convergence.}

\begin{abstract} 
Asymptotic synchronization is one of the essential differences between artificial neural networks and biologically inspired neural networks due to mismatches from dynamical update of weight parameters and heterogeneous activations. In this paper a new concept of approximate synchronization is proposed and investigated for Hopfield neural networks coupled with nonlinear memristors.  It is proved that global solution dynamics are robustly dissipative and a sharp ultimate bound is acquired. Through \emph{a priori} uniform estimates on the interneuron differencing equations, it is rigorously shown that approximate synchronization to any prescribed small gap at an exponential convergence rate of the memristive Hopfield neural networks occurs if an explicitly computable threshold condition is satisfied by the interneuron coupling strength coefficient. The main result is further extended to memristive Hopfield neural networks with Hebbian learning rules for a broad range of applications in unsupervised train learning.
\end{abstract}

\maketitle
 
\section{Introduction}

Artificial neural networks are used to somehow emulate the dynamic behavior and functions of biological neurons and nervous systems through a variety of mathematical models and computational algorithms \cite{HJ, Lin, JS}. Hopfield neural networks (HNN) initially proposed in 1982 \cite{Hp} is a recursive or recurrent artificial neural networks \cite{OC}. HNN models coupled with memristor-based synapses have been actively studied and achieved rich and important results in the recent decade \cite{MI, D, Yu}.

Memristor as a physics concept, which relates the electric charge of circuit and the electromagnetic flux expressed by time integral of the voltage, was initiated by Chua \cite{Chua}. It is recognized \cite{V, Xu, VM} that memristor is a promising and ideal element in spiking neural networks to mimic the learning and memorizing functions of human brain. Many researches are reported in recent years on memristive biological neural network synchronization \cite{Lin, Y1, Y2, YTT}, image encryption \cite{D, W}, nanotechnology \cite{E}, and quantum computers \cite{Li}.

Synchronization dynamics is an important topic on cognitive functions and information processing for biological neural networks, chaotic systems, and deep learning \cite{Ay, BK, WD}. For biological neural network models in terms of Hindmarsh-Rose equations \cite{Y2}, FitzHugh-Nagumo equations \cite{YTT}, and reaction-diffusion equations \cite{YT} with memristive synapses, or cellular models with boundary feedback \cite{LY}, complete synchronization (meaning interneuron gaps for all solutions converge to zero) can be triggered if the network coupling strength satisfies a threshold condition, because in these models all the nodes of the network are represented by a system of qualitatively identical differential or difference equations. 

In the contrast, for general artificial neural networks used in machine learning and AI applications, such complete synchronization cannot be achieved because of the mismatched weight dynamics as well as the heterogeneous activation functions which are inherently characterized. The mismatching component equations of different nodes in artificial neural network model will inevitably cause nonzero asymptotic errors and possibly incomplete synchronization of the network. 

It is essentially open to establish an analytic theory on approximate synchronization for the typical ODE models \cite{Ag} of artificial neural networks, in view of currently sporadic researches reported on particular reaction-diffusion neural networks of FHN neurons, cf. \cite{JT} and references therein.

In this paper we shall quantitatively define approximate synchronization for artificial neural network models and investigate a novel model of memristive Hopfield neural networks (mHNN) featuring nonlinear memristor-potential coupling and weak synapses among neuron nodes. We shall rigorously prove a main result that approximate synchronization of the mHNN occurs if the coefficient of network coupling strength satisfies a computable threshold condition. The extension to another new model of memristive HNN with Hebbian learning rules is also achieved.

Consider a memristive Hopfield neural network composed of $m$ neuron nodes, denoted by $\mathcal{NW} = \{\mathcal{N}_i : i = 1, 2, \cdots, m\}$, where $m \geq 2$ is a positive integer. Each neuron $\mathcal{N}_i, 1 \leq i \leq m$, and a memristor characterized by its memductance $\rho$ are presented by the following differential equations: 
\beq \bl{Meq}
	\begin{split}
	\frac{d u_i}{d t} & = - a_i u_i + \sum_{j =1}^m w_{ij} \, f_j (u_j) + k\, \vp_i (\rho) u_i + J_i - Pu_i  \sum_{j=1}^m \Gamma (u_j), \;\; 1 \leq i \leq m,  \\
	\frac{d \rho}{d t} & = \sum_{i=1}^m \ga_i u_i - b \rho,  \quad  t > 0.
	\end{split} 
\eeq
The coefficient $P > 0$ is called the network coupling strength and the interneuron synaptic function
\beq \bl{Ga}
	\G (s) = \frac{1}{1 + \exp [- r(s - V)]}, \quad s \in \mb{R},
\eeq
is a sigmoidal function, in which the parameter $r > 0$ shapes the slope of $\G (s)$ and the parameter $V \in \mathbb{R}$ is a switch for neuron bursting. The state variable $u_i(t)$ simulates the electrical potential of the $i$th neuron $\mathcal{N}_i$, for $1 \leq i \leq m$. The initial states of the ODE system \eqref{Meq} will be denoted by 
\beq \bl{inc}
	 u_i^0 = u_i(0),  \quad \rho^0 = \rho (0), \quad 1 \leq i \leq m.
\eeq
We denote the initial values in \eqref{inc} by $g^0 = \text{col}\; (u_1^0, \, u_2^0, \cdots, u_m^0, \, \rho^0) \in \mathbb{R}^{m+1}$. The memristive feature of this model is reflected by the memductance-potential coupling term $k \vp_i (\rho) u_i$ and the weak synapses among neurons is shown by the nonlinear sigmoidal coupling $Pu_i \, \sum_{j=1}^m \Gamma (u_j)$. 

For this mHNN model, we notice that  the interaction weights $\{w_{ij}: 1 \leq i, j \leq m\}$ and the activation functions $\{f_j(u_j): 1 \leq j \leq m\}$ can well be different \cite[Chapter 2]{OC}, while the memristor window functions $\vp_i (\rho)$ for different neuron nodes $\mathcal{N}_i$ can also be heterogeneous \cite[Chapter 2]{VM}. It implies that complete synchronization for such artificial neural networks in general will not be possible and approximate synchronization is the aim to research.

As a setting of the problem, we make the following Assumption: The scalar functions $f_i (s)$ and $\vp_i (s), 1 \leq i \leq m$ are locally Lipschitz continuous functions and satisfy the following conditions: 
\beq \bl{Asp}
	a_i > k, \quad |f_i(s)|  \leq \gb \;\; \text{and} \;\; \vp_i (s) = 1 - \eta_i s^2,  \;\; \text{for} \; s \in \mathbb{R}, \; 1 \leq i \leq m.
\eeq
Assume that  the parameters $a_i, b, \gb, \eta_i$ and the memristive coupling strength $k$ can be any given positive constants, while weight coefficients $w_{ik}$, input  potential constants $J_i$ and coefficients $\ga_i$ can be any given real numbers. 

Note that the window function $k \vp_i (\rho)$ set in \eqref{Asp} is adaptable to deal with other types of memristors such as linear, hyperbolic-tangent, or Titanium dioxide memristor with the window function $\Hat{\vp} (s) = c s(1 - s)$ \cite{VM}. 

\begin{definition}
	An artificial neural network with a mathematical model of differential equations in a state space $Z$ such as the mHNN $\mathcal{NW}$ is said to be approximately synchronizable if there exists a sufficient threshold condition on the network coupling parameter $\mathcal{P}$ such that for any prescribed gap $\ve > 0$ there is a threshold value $\mathcal{P}(\ve)$ to trigger an approximate convergence with the network synchronous degree 
\beq \bl{gap}
	\deg_s \,(\mathcal{NW})= \sup_{g^0 \in Z} \left\{ \max_{1 \leq i < j \leq m} \left\{\limsup_{t \to \infty} \, |u_i (t) - u_j(t)| \right\}\right\} < \ve,
\eeq
where $u_i (t)$ is the state variable of the $i$th neuron node, $1 \leq i  \leq m$. If the limsup convergence in \eqref{gap} admits a uniform exponential rate, then it is called approximate exponential synchronization.
\end{definition}

The rest of the paper is organized as follows. In Section 2 we analyze dissipative dynamics of the solutions of the initial value problem \eqref{Meq}-\eqref{inc} and prove the existence of absorbing set in the state space. In Section 3 we shall prove the main result on approximate exponential synchronization of this memristive Hopfield neural network based on sharp uniform estimates of the interneuron differencing equations. In Section 4 the result is generalized to memristive Hopfield-Hebbian neural networks link to a broad range of unsupervised learning applications. 

\section{\textbf{Dissipative Dynamics and Global Attractor}}

The mathematical model \eqref{Meq}-\eqref{inc} can be formulated into an initial value problem of the evolutionary equation:
\begin{equation} \label{pb}
\begin{split}
	\frac{d g}{d t} \,& = A g + F(g),   \; \;  t > 0,  \\
	& g(0) = g^0 \in \mathbb{R}^{m+1}.
\end{split}
\end{equation}
In \eqref{pb} the column vector function $g(t) = \text{col}\; (u_1 (t), u_2 (t), \cdots, u_m (t), \rho(t))$ and the initial state is
$$
	g(0) = g^0 = \text{col}\; (u_1^0, \, u_2^0, \cdots, u_m^0, \, \rho^0).
$$
The norm $\|g(t)\|$ in the state space $\mathbb{R}^{m+1}$ for \eqref{pb} is defined to be $\|g(t)\|^2 = \sum_{i=1}^m |u_i(t)|^2 + |\rho (t)|^2$. In \eqref{pb} the diagonal square matrix 
\begin{equation} \label{opA}
	A = diag \, (- a_1, - a_2, \cdots, -a_m, - b)
\end{equation}
and the nonlinear vector  
\begin{equation} \label{opf}
F(g) =
\begin{pmatrix}
	\sum_{j =1}^m w_{1j} f_j (u_j) + k\, \vp_1 (\rho) u_1 + J_1 - Pu_1 \, \sum_{j=1}^m \Gamma (u_j) \\[7pt]
	\sum_{j =1}^m w_{2j} f_j (u_j) + k\, \vp_2 (\rho) u_2 + J_2 - Pu_2 \, \sum_{j=1}^m \Gamma (u_j) \\[4pt]
	\vdots\\[4pt]
	\sum_{j =1}^m w_{mj} f_j (u_j) + k\,\vp_m (\rho) u_m + J_m - Pu_m \, \sum_{j=1}^m \Gamma (u_j) \\[7pt]
	\sum_{i=1}^m \ga_i u_i
\end{pmatrix}
\end{equation}
is a locally Lipschitz continuous vector function. 

As preliminaries cf.\cite{CV, GY}, a general dynamical system only for time $t \geq 0$ can be called semiflow. For a semiflow denoted by  $\{S(t)\}_{t \geq 0}$ on a Banach space $\ms{X}$, a bounded subset $B^* \subset \ms{X}$ is called absorbing set if for any given bounded set $B$ in the space $\ms{X}$ there is a finite time $T_B \geq 0$ such that $S(t)B \subset B^*$ for all $t  > T_B$. A semiflow is called dissipative if there exists an absorbing set in the state space $\ms{X}$, which also implies the existence of a global attractor if $\ms{X}$ is finite dimensional.

The Young's inequality are used widely in differential inequality management. It states that for any positive numbers $x$ and $y$, if $\frac{1}{p} + \frac{1}{q} = 1$ and $p > 1, q > 1$, then
\beq \bl{Yg}
	x\,y \leq \frac{1}{p} \ve x^p + \frac{1}{q} C(\ve, p)\, y^q \leq \ve x^p + C(\ve, p)\, y^q, 
\eeq
where $C(\ve, p) = \ve^{-q/p}$ and the constant $\ve > 0$ can be arbitrarily given. 

\begin{theorem} \label{T1}
	For any given initial state $g^0 \in \mathbb{R}^{m+1}$, there exists a unique global solution $g(t; g^0)$ in time $t \in [0, \infty)$ for the initial value problem \eqref{pb} of the memristive Hopfield neural network $\mathcal{NW}$ described by the model \eqref{Meq} and the Assumption \eqref{Asp}. 
\end{theorem}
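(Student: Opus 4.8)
The plan is to establish global existence and uniqueness in two stages. First I would invoke the classical Cauchy--Lipschitz theorem to secure a unique local solution, and then I would derive an \emph{a priori} bound on $\|g(t)\|$ that precludes finite-time blowup, thereby extending the solution to all of $[0, \infty)$.

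For the first stage, since the matrix $A$ in \eqref{opA} is constant and the nonlinear map $F(g)$ in \eqref{opf} is locally Lipschitz continuous on $\mathbb{R}^{m+1}$, the fundamental existence theorem for ordinary differential equations furnishes a unique maximal solution $g(t; g^0)$ on a maximal interval $[0, T_{\max})$, with the dichotomy that either $T_{\max} = \infty$ or else $\|g(t)\| \to \infty$ as $t \to T_{\max}^-$. It therefore suffices to prove a bound showing that $\|g(t)\|$ remains finite on every bounded time interval.

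For the second stage I would carry out an energy estimate. Taking the inner product of \eqref{pb} with $g(t)$ gives
\[
	\tfrac{1}{2} \frac{d}{dt} \|g\|^2 = \inpt{g, Ag} + \inpt{g, F(g)},
\]
where the linear part is purely dissipative, $\inpt{g, Ag} = - \sum_{i=1}^m a_i u_i^2 - b \rho^2$. The nonlinear part I would treat term by term. The memristor coupling contributes $\sum_{i=1}^m k \vp_i(\rho) u_i^2 = \sum_{i=1}^m k u_i^2 - \sum_{i=1}^m k \eta_i \rho^2 u_i^2$, in which the quartic piece is nonpositive and hence favorable, while the quadratic piece $k u_i^2$ is absorbed by $- a_i u_i^2$ exactly because the Assumption \eqref{Asp} imposes $a_i > k$. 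The weight terms $\sum_{i,j} w_{ij} f_j(u_j) u_i$ and the bias terms $\sum_i J_i u_i$ are bounded by combining $|f_j| \leq \gb$ with the Young inequality \eqref{Yg}, splitting off a small multiple of $u_i^2$ against constants. The sigmoidal coupling contributes $- P \sum_i u_i^2 \sum_j \Gamma(u_j) \leq 0$, since $P > 0$ and $0 < \Gamma(s) < 1$, so it is dissipative and may simply be discarded. Finally, the memductance equation produces the cross term $\rho \sum_i \ga_i u_i$, which is controlled by \eqref{Yg} by absorbing it into fractions of $- b \rho^2$ and of the residual $-(a_i - k) u_i^2$.

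Collecting these estimates, with $\ap = \min_{1 \leq i \leq m}(a_i - k) > 0$, one arrives at a differential inequality of the form
\[
	\frac{d}{dt} \|g\|^2 \leq - C_1 \|g\|^2 + C_2
\]
for positive constants $C_1, C_2$ depending only on the fixed parameters of the model. Gronwall's lemma then yields $\|g(t)\|^2 \leq e^{- C_1 t} \|g^0\|^2 + (C_2/C_1)(1 - e^{- C_1 t})$, which is finite for every $t \geq 0$; this uniform bound contradicts blowup and forces $T_{\max} = \infty$, completing the proof. The main obstacle is the careful bookkeeping required to verify that every positive contribution---above all the quadratic memristor term $k u_i^2$ and the cross term $\rho \sum_i \ga_i u_i$---is genuinely dominated by the dissipation $- \sum_i a_i u_i^2 - b \rho^2$; this is precisely where the structural hypothesis $a_i > k$ and the uniform bound $|f_i| \leq \gb$ are indispensable, whereas the seemingly dangerous quartic term $- k \eta_i \rho^2 u_i^2$ in fact carries a favorable sign.
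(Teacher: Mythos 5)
Your overall strategy---local existence via Cauchy--Lipschitz plus an energy estimate and a Gronwall bound to rule out finite-time blowup---is exactly the route the paper takes, and your term-by-term treatments (discarding the nonnegative sigmoidal coupling, absorbing $k u_i^2$ using $a_i > k$, noting the quartic memristor piece $-k\eta_i\rho^2u_i^2$ has a favorable sign, bounding the weight and bias terms via $|f_j|\le\beta$ and Young) all match the paper's. The one step that does not go through as you state it is the cross term $\rho\sum_{i}\gamma_i u_i$. You claim it can be absorbed into fractions of $-b\rho^2$ and of the residual $-(a_i-k)u_i^2$ so as to yield $\frac{d}{dt}\|g\|^2\le -C_1\|g\|^2+C_2$ for the \emph{unweighted} norm. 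But Young's inequality $|\gamma_i u_i\rho|\le\epsilon u_i^2+\frac{\gamma_i^2}{4\epsilon}\rho^2$ forces the product of the two coefficients to be at least $\gamma_i^2/4$, so fitting both under the available dissipation requires roughly $(a-k)\,b \ge m\gamma_i^2$, which the Assumption \eqref{Asp} does not provide ($\gamma_i$ is an arbitrary real number and $b$ an arbitrary positive constant). In general the quadratic form $-\sum_i(a_i-k)u_i^2-b\rho^2+\rho\sum_i\gamma_i u_i$ is not negative definite, so the decay inequality you assert for $\|g\|^2$ can fail.

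The paper circumvents this by working with the weighted energy $\sum_i C_1 u_i^2+\rho^2$ and choosing the scale $C_1$ in \eqref{C1} large enough that the $u_i^2$ contribution $\frac{m\gamma_i^2}{2b}$ produced by Young is dominated by $C_1(a-k)$; this is the entire purpose of the constant $C_1$ that your proposal omits. For Theorem \ref{T1} as stated the gap is harmless: to preclude blowup you only need an inequality of the form $\frac{d}{dt}\|g\|^2\le C_3\|g\|^2+C_2$, which follows at once by bounding the cross term by $\frac{1}{2}\sum_i\gamma_i^2 u_i^2+\frac{m}{2}\rho^2$, and Gronwall then gives a finite (exponentially growing) bound on every bounded interval, forcing $T_{\max}=\infty$. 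But if you want the genuinely dissipative estimate---which the paper needs immediately afterwards in Theorem \ref{T2} to produce the absorbing set---you must either weight the norm as the paper does or impose an additional relation among $b$, $\gamma_i$, and $a-k$.
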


\begin{proof}
Since the nonlinear vector function $F(g)$ in the semilinear equation \eqref{pb} is locally Lipschitz continuous, there exists a unique local solution $g(t; g^0)$ in time to the initial value problem \eqref{pb}. 

Multiply the $u_i$-equation in \eqref{Meq} by $C_1 u_i(t)$ for $t \in [0, T_{max})$, which is the maximal existence interval of that solution. Then sum them altogether for $1 \leq i \leq m$. By the Assumption \eqref{Asp} we get		
\begin{equation} \bl{ui}
	\begin{split}
	&\frac{1}{2} \frac{d}{dt} \sum_{i = 1}^m C_1 u_i^2 (t) \leq \frac{1}{2} \frac{d}{dt} \sum_{i = 1}^m C_1 u_i^2 (t) + C_1 \sum_{i=1}^m \sum_{j=1}^m \frac{Pu_i^2}{1 + \exp [- r(u_j - V)]}  \\
	= &\, \sum_{i=1}^m C_1 \left[- a_i u^2_i (t) + \sum_{j =1}^m w_{ij} f_j (u_j(t)) u_i(t) + k \vp_i (\rho (t)) u_i^2(t) +  J_i u_i(t) \right]  \\
	\leq &\, \sum_{i=1}^m C_1 \left[- a u_i^2(t) + m W\gb |u_i (t)| + k \left(1 - \eta_i \rho^2(t)\right) u_i^2(t) + J |u_i(t)| \right]   \\
	\leq &\, \sum_{i=,1}^m C_1\left[- (a - k) u_i^2(t) + (m W \gb + J ) |u_i (t)| \right] \\[4pt]
	\leq &\, \sum_{i=,1}^m C_1\left[- \frac{1}{2}(a - k) u_i^2(t) + \frac{(m W \gb + J )^2}{2(a - k)} \right], \quad \text{for} \;\; t \in [0, T_{max}),
	\end{split}
\end{equation} 
where $a = \min \{a_i: 1 \leq i \leq m\} > k$, the double summation term on the right-hand side of the first inequality is nonnegative, $- k\eta_i \rho^2(t) u_i^2(t) \leq 0$, and a square completion is used in the last step in \eqref{ui}. The scaling constant $C_1 > 0$ is to be specified later and the new positive parameters $a = \min\, \{ a_i: 1 \leq i \leq m\}, \, W = \max \left\{|w_{ij} |: 1 \leq i, j \leq m \right\}, \, J = \max \left\{ |J_i|: 1 \leq i \leq m \right\}$.

Then multiply the $\rho$-equation in \eqref{Meq} with $\rho (t)$, for $t \in [0, T_{max})$. By the Young's inequality \eqref{Yg}, we have

\beq \bl{rho}
	\begin{split}
	&\frac{1}{2} \frac{d}{dt} \rho^2 (t) = \sum_{i=1}^m \ga_i \,u_i(t) \rho(t) - b \rho^2(t)  \\[2pt]
	\leq &\, \sum_{i=1}^m \left[\frac{m \ga_i^2}{2b} u_i^2(t) + \frac{b}{2m} \rho^2(t) \right] - b \rho^2(t) = \sum_{i=1}^m \frac{m \ga_i^2}{2b} u_i^2 (t) - \frac{b}{2} \rho^2(t), \quad  t \in [0, T_{max}).
	\end{split}
\eeq
Add the above two inequalities \eqref{ui} and \eqref{rho}. We come up with 
\beq \bl{ur}
	\begin{split}
	& \frac{1}{2}\frac{d}{dt} \left(\sum_{i = 1}^m C_1 u_i^2(t) + \rho^2(t)\right)   \\
        \leq \sum_{i=1}^m C_1 &\left[ - \frac{1}{2}(a - k) u_i^2(t) + \frac{(m W \gb + J )^2}{2(a - k)}\right] +  \sum_{i=1}^m \frac{m \ga_i^2}{2b}\, u_i^2 (t) - \frac{b}{2}\, \rho^2(t)   \\
        = - \frac{1}{2} \sum_{i=1}^m &\left[ C_1 (a - k) - \frac{m \ga_i^2}{b}\right] u_i^2(t) - \frac{b}{2}\,\rho^2(t) + \frac{C_1 m (m W \gb + J )^2}{2(a - k)}, 
	\end{split}
\eeq
for $t \in [0, T_{max})$, which is the maximal existence interval of a local solution $g(t; g^0)$ of this initial value problem \eqref{pb} and may depend on the initial state $g^0$. 

Now we can choose the scaling constant to be
\beq \bl{C1}
	C_1 = \frac{1}{(a - k)} \left(\frac{m \ga_i^2}{b} + b \right) \quad \text{so that}  \quad  C_1(a - k) - \frac{m \ga_i^2}{b} = b.
\eeq
With this choice, from \eqref{ur} it follows that
\beq \bl{up}
	\begin{split}
	& \frac{d}{dt} \left(\sum_{i = 1}^m C_1 u_i^2(t) + \rho^2(t)\right) + b \left(\sum_{i=1}^m u_i^2(t) + \rho^2(t) \right)  \\ 
        \leq &\, \frac{1}{a - k} \left(\frac{m \ga_i^2}{b} + b \right) \frac{m (m W \gb + J )^2}{a - k},  \quad  \text{for} \;  t \in [0, T_{max}).
	\end{split}
\eeq
Denote the positive constant on the right-hand side of \eqref{up} by
\beq \bl{C2}
	C_2 = \left(\frac{m \ga_i^2}{b} + b \right) \frac{m (mW \gb + J)^2}{(a - k)^2}.
\eeq
Then \eqref{up} gives rise to a Gronwall-type differential inequality: 
\beq \bl{GZ}
	\frac{d}{dt} \left(\sum_{i = 1}^m C_1 u_i^2(t) + \rho^2(t)\right) + \mu \left(\sum_{i=1}^m C_1 u_i^2(t) + \rho^2(t) \right) \leq C_2,  \;\; \text{for} \; t \in [0, T_{max}),
\eeq
where 
\beq \bl{mu}
	\mu = b \,\min \left\{\frac{1}{C_1}, \; 1 \right\}.
\eeq
We can solve this Gronwall differential inequality \eqref{GZ} to obtain the following bounding estimate of all the solutions to the governing equation \eqref{pb} of this memristive Hopfield neural network model \eqref{Meq}, 
$$
	 \left(\sum_{i=1}^m C_1 u_i^2(t) + \rho^2(t) \right) \leq e^{- \mu t} \left(\sum_{i=1}^m C_1 u_i^2(0) + \rho^2(0) \right) + \frac{C_2}{\mu}, 
$$
so that
\beq \label{dse}
	\|g(t; g^0)\|^2 = \sum_{i=1}^m |u_i (t)|^2 + |\rho (t)|^2 \leq \frac{\max \{C_1, 1\}}{\min \{C_1, 1\}} e^{- \mu t} \|g^0\|^2 +  \frac{C_2}{\mu \min \{C_1, 1\}}, \;\, t \in [0, \infty).
\eeq
It is also shows that the existence interval  $[0, T_{max}) = [0, \infty)$ for every solution with any initial state $g^0 \in \mathbb{R}^{m+1}$, because the solution $g(t; g^0)$ will never blow up at any finite time. Therefore, under the Assumption \eqref{Asp}, for any given initial state there exists a unique global solution for $t \in [0, \infty)$ for the mHNN model \eqref{Meq}.
\end{proof}

Based on the global existence of solutions shown in Theorem \ref{T1}, the solution semiflow $\{S(t): g^0 \longmapsto g(t; g^0)\}_{t \geq 0}$ on the state space $\mathbb{R}^{m+1}$ can be called the mHNN semiflow. The next theorem shows that $\{S(t)\}_{t \geq 0}$ is a dissipative dynamical system and there exists a global attractor in the state space. 

\begin{theorem} \label{T2}
	There exists a bounded absorbing set for the mHNN semiflow $\{S(t)\}_{t \geq 0}$ in the state space $\mathbb{R}^{m+1}$, which is the bounded ball 
\beq \label{Br}
	B^* = \{ g \in \mathbb{R}^{m+1}: \| g \|^2 \leq Q\}.
\eeq 
Here the uniform constant 
\beq \bl{K}
	Q = 1 +  \frac{C_2}{\mu \min \{C_1, 1\}}
\eeq
is independent of any initial state. The constants $C_1$ and $C_2$ are given in \eqref{C1} and \eqref{C2}. Moreover, there exists a global attractor $\ms{A}$ for this mHNN semiflow.
\end{theorem}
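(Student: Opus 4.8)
The plan is to read off everything from the global dissipative estimate \eqref{dse} already proved in Theorem \ref{T1}; the analytic content is entirely contained there, so what remains is to repackage that bound into absorbing-set language and then invoke a finite-dimensional attractor criterion. The crucial feature of \eqref{dse} is that the dependence on the initial data sits entirely in the transient factor $\frac{\max\{C_1,1\}}{\min\{C_1,1\}}\, e^{-\mu t}\|g^0\|^2$, which decays to zero as $t \to \infty$, while the second summand is the uniform, data-independent ceiling $\frac{C_2}{\mu\min\{C_1,1\}}$ that appears in the definition \eqref{K} of $Q$, with $\mu$ from \eqref{mu} and $C_1, C_2$ from \eqref{C1}, \eqref{C2}.

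To establish the absorbing property, I would take an arbitrary bounded set $B \subset \mathbb{R}^{m+1}$ and set $R = \sup_{g^0 \in B}\|g^0\|$. Since $t \mapsto e^{-\mu t}$ is strictly decreasing to $0$, there is a finite time $T_B \geq 0$, explicitly solvable from the requirement $\frac{\max\{C_1,1\}}{\min\{C_1,1\}}\, e^{-\mu t} R^2 \leq 1$, beyond which the transient term is at most $1$. For all $t > T_B$ the estimate \eqref{dse} then gives $\|g(t; g^0)\|^2 \leq 1 + \frac{C_2}{\mu\min\{C_1,1\}} = Q$, that is, $S(t)B \subset B^*$ with $B^*$ the ball in \eqref{Br}. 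Because $B$ was arbitrary, $B^*$ is a bounded absorbing set for the semiflow; the extra unit summand in \eqref{K} is exactly the slack that makes this containment uniform over $B$.

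For the global attractor I would apply the standard abstract result for dissipative semiflows recalled in the preliminaries (cf. \cite{CV, GY}). Two hypotheses need to be verified: continuity of each solution map $S(t)$, which follows from continuous dependence of ODE solutions on initial data guaranteed by the local Lipschitz continuity of $F$ in \eqref{opf}, and asymptotic compactness. In the finite-dimensional state space $\mathbb{R}^{m+1}$ the latter is automatic, since the bounded absorbing set $B^*$ has compact closure; hence the omega-limit set $\mathscr{A} = \omega(B^*) = \bigcap_{s \geq 0}\overline{\bigcup_{t \geq s} S(t)B^*}$ is a nonempty, compact, invariant set that attracts every bounded subset of $\mathbb{R}^{m+1}$, which is the sought global attractor.

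I do not anticipate a genuine obstacle here, as the single dissipative inequality \eqref{dse} already carries the essential work done in Theorem \ref{T1}. The only points demanding care are confirming that the uniform ceiling in \eqref{dse} coincides with the constant $Q$ in \eqref{K} up to the harmless unit buffer, and checking the continuity and compactness hypotheses of the attractor criterion, both of which are routine once the state space is recognized as finite dimensional.
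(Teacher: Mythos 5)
Your proposal is correct and follows essentially the same route as the paper: both arguments read the absorbing property directly off the dissipative estimate \eqref{dse}, choose $T_B$ so that the transient term drops below the unit buffer built into $Q$, and then obtain the global attractor as $\mathscr{A} = \omega(B^*)$ via the standard criterion, with compactness supplied automatically by the finite dimensionality of $\mathbb{R}^{m+1}$. Your explicit remark on continuity of $S(t)$ via Lipschitz continuity of $F$ is a small addition the paper leaves implicit, but it does not change the argument.
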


\begin{proof}
This is the consequence of the global uniform estimate \eqref{dse} shown above, which implies that, since $e^{- \mu t} \to 0$ as $t \to \infty$,
\beq \label{lsp}
	\limsup_{t \to \infty} \|g(t; g^0)\|^2 = \limsup_{t \to \infty} \left[ \sum_{i=1}^m |u_i (t)|^2 + |\rho (t)|^2 \right] < Q 
\eeq
for all the solutions of \eqref{pb} with any initial state $g^0$. Moreover, \eqref{dse} shows that for any given bounded set $B = \{g \in \mathbb{R}^{m+1}: \|g \|^2 \leq L\}$ in the state space, there exists a finite time
\beq \bl{TB}
	T_B = \frac{1}{\mu} \log^+ \left(L\,\frac{\max \{C_1, 1\}}{\min \{C_1, 1\}}\right) \geq 0
\eeq
such that all the solution trajectories started at the initial time $t = 0$ from inside the set $B$ will permanently enter the bounded ball $B^*$ shown in \eqref{Br} for all $t > T_B$.  Therefore, the bounded ball $B^*$ is an absorbing set for the semiflow $\{S(t)\}_{t \geq 0}$ so that this mHNN semiflow is dissipative dynamical system..

Finally, since the state space $\mathbb{R}^{m+1}$ is a finite-dimensional Hilbert space, the closed bounded absorbing set $B^*$ is a compact set. According to \cite[Theorem 1.1]{CV}, there exists a global attractor 
$$
	\mathscr{A} = \gw (B^*) = \bigcap_{\tau \geq 0}\; \overline{\bigcup_{t \geq \tau} S(t) B^*}\, ,
$$
which is the $\gw$-limit set of an absorbing set for this mHNN semiflow.
\end{proof}

\section{\textbf{Approximate Synchronization of Memristive HNN}} 

In this section we shall prove the main result on approximate synchronization of the memristive Hopfield neural networks (mHNN) described by the model \eqref{Meq}. 

For the memristive Hopfield neural network $\mathcal{NW}$ with the model \eqref{Meq}, we define the interneuron gap function between any two neuron nodes $\mathcal{N}_i$ and $\mathcal{N}_j$ to be 
\begin{gather*}
	U_{ij} (t) = u_i(t) - u_j (t), \quad \text{for} \;\;  1\leq i, j \leq m. 
\end{gather*}
The differencing equations for $U_{ij}(t)$ are
\beq \bl{deq} 
	\begin{split}
	\frac{dU_{ij}}{dt} =&\, - \left(a_i U_{ij} + (a_i - a_j) u_j\right) + \sum_{\ell =1}^m \, \left(w_{i\ell} - w_{j\ell}\right) f_\ell (u_\ell)  \\
	&\,+ k\, (\vp_i (\rho) u_i - \vp_j (\rho) u_j) + (J_i - J_j) - P U_{ij} \sum_{\ell=1}^m \Gamma (u_\ell).
	\end{split}
\eeq
The related parameters involved in the differencing equations are denoted by
\begin{gather*}
	a^* = \max \{|a_i - a_j|: 1 \leq i, j \leq m \}, \quad W^* = \max \{ |w_{i\ell} - w_{j\ell}|: 1 \leq i, j, \,\ell \leq m\},   \\[2pt]
	\eta^* = \max \{|\eta_i - \eta_j|: 1 \leq i, j \leq m\},  \quad   J^* = \max \{|J_i - J_j|: 1 \leq i, j \leq m\}.
\end{gather*}
Now we prove the main result of this paper.
\begin{theorem} \bl{ThM}
	The memristive Hopfield neural network $\mathcal{NW}$ presented by the model \eqref{Meq} with Assumption \eqref{Asp} is approximate exponenially synchronizable. For any prescribed gap $\ve > 0$, there exists a threshold $P^*$ of network coupling strength,
\beq \bl{Pve}
	P^*(\ve) = \frac{1}{m \ve} \left( m W^* \gb + a^* Q^{1/2} + k \eta^* Q^{3/2} + J^* \right) \left[1 + \exp \{r (\sqrt{Q} + |V|)\} \right] ,
\eeq 
such that if the interneuron coupling strength $P > P^*(\ve)$, then the neural network $\mathcal{NW}$ is approximately synchronized, 
\beq \bl{aas}
	\deg_s \,(\mathcal{NW})= \sup_{g^0 \in Z} \left\{ \max_{1 \leq i < j \leq m} \left\{\limsup_{t \to \infty} \, |u_i (t) - u_j(t)| \right\}\right\} < \ve
\eeq
at a uniform exponential convergence rate
\beq \bl{rate}
	\mu (P) =    (a - k) + \frac{m P }{1 + \exp \left\{r (\sqrt{Q} + |V|)\right\}} > 0.
\eeq
\end{theorem}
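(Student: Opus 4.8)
The plan is to derive a scalar differential inequality for each interneuron gap $U_{ij}(t)$ from the differencing equation \eqref{deq} and then read off the $\limsup$ by a Gronwall argument. First I would multiply \eqref{deq} by $U_{ij}$ to form $\tfrac12\frac{d}{dt}U_{ij}^2 = U_{ij}\,\frac{dU_{ij}}{dt}$, working on the time interval $t > T_B$ on which Theorem \ref{T2} guarantees the trajectory has entered the absorbing ball $B^*$, so that the uniform pointwise bounds $|u_\ell(t)| \leq Q^{1/2}$ and $|\rho(t)| \leq Q^{1/2}$ are available at every node. The decisive term is the sigmoidal coupling $-P\,U_{ij}^2\sum_{\ell=1}^m \G(u_\ell)$: since $u_\ell - V \geq -(Q^{1/2}+|V|)$ forces $\G(u_\ell) \geq [1+\exp\{r(Q^{1/2}+|V|)\}]^{-1}$, this term is bounded above by $-\frac{mP}{1+\exp\{r(Q^{1/2}+|V|)\}}\,U_{ij}^2$, which supplies the $P$-dependent damping appearing in the rate \eqref{rate}.

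Next I would convert the remaining terms into a self-damping part and a bounded forcing part. The linear term $-a_iU_{ij}^2$ together with the leading piece of the memristive coupling produces $-(a_i-k)U_{ij}^2 \leq -(a-k)U_{ij}^2$, using $a_i \geq a > k$ from Assumption \eqref{Asp}; here the key algebraic step is the decomposition $k(\vp_i(\rho)u_i - \vp_j(\rho)u_j) = kU_{ij} - k\eta_i\rho^2U_{ij} - k\rho^2(\eta_i-\eta_j)u_j$, whose middle term has a favourable sign after multiplication by $U_{ij}$ and whose last term is controlled by $k\eta^*Q^{3/2}|U_{ij}|$. The heterogeneity mismatches are then estimated one by one: $|(a_i-a_j)u_j\,U_{ij}| \leq a^*Q^{1/2}|U_{ij}|$, $|\sum_\ell(w_{i\ell}-w_{j\ell})f_\ell(u_\ell)\,U_{ij}| \leq mW^*\gb|U_{ij}|$ by $|f_\ell| \leq \gb$, and $|(J_i-J_j)U_{ij}| \leq J^*|U_{ij}|$. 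Collecting these yields $\tfrac12\frac{d}{dt}U_{ij}^2 \leq -\mu(P)U_{ij}^2 + M\,|U_{ij}|$ with $M = mW^*\gb + a^*Q^{1/2} + k\eta^*Q^{3/2} + J^*$ and $\mu(P)$ as in \eqref{rate}.

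Dividing by $|U_{ij}|$ gives the first-order inequality $\frac{d}{dt}|U_{ij}| \leq -\mu(P)|U_{ij}| + M$ (interpreted via the upper Dini derivative to avoid the zero set of $U_{ij}$), and Gronwall's lemma then yields $|U_{ij}(t)| \leq e^{-\mu(P)(t-T_B)}|U_{ij}(T_B)| + M/\mu(P)$, so that $\limsup_{t\to\infty}|U_{ij}(t)| \leq M/\mu(P)$ uniformly in the initial state, since $M$ and $\mu(P)$ depend only on the uniform bound $Q$. Finally, since $\mu(P) > \frac{mP}{1+\exp\{r(Q^{1/2}+|V|)\}}$, the hypothesis $P > P^*(\ve)$ from \eqref{Pve} is precisely the condition $M < \frac{mP}{1+\exp\{r(Q^{1/2}+|V|)\}}\,\ve$, whence $M/\mu(P) < \ve$, giving \eqref{aas} at the exponential rate $\mu(P)$. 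I expect the main obstacle to be the second paragraph: correctly splitting the heterogeneous memristive term so that the common factor reinforces the dissipation through $a_i > k$ while the $\eta$-mismatch stays quadratically bounded in $Q$, and simultaneously checking that the sigmoid lower bound is sharp enough that the resulting threshold matches the stated $P^*(\ve)$ rather than a larger constant.
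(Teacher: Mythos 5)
Your proposal is correct and follows essentially the same route as the paper: the same energy estimate on the differencing equation \eqref{deq}, the same use of the absorbing-ball bound $Q$ to control the mismatch terms by $a^*Q^{1/2}$, $k\eta^*Q^{3/2}$, $mW^*\gb$, $J^*$ and to lower-bound the sigmoid sum by $m/[1+\exp\{r(\sqrt{Q}+|V|)\}]$, and the same threshold computation. The only cosmetic difference is the final step, where you run Gronwall on $|U_{ij}|$ via an upper Dini derivative while the paper applies Young's inequality to the term $2M|U|$ and runs Gronwall on $U_{ij}^2$; both yield the identical bound $\limsup_{t\to\infty}|U_{ij}(t)|\le M/\mu(P)<\ve$ under $P>P^*(\ve)$.
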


\begin{proof}
In this proof we shall denote by $U(t) = U_{ij}(t)$ for a given pair of indices $i, j$. 

Step 1. For any given $1 \leq i \neq j \leq m$, multiply the equation \eqref{deq} with $U_{ij} (t) = U(t)$ and by Assumptions \eqref{Asp} to get
\beq \bl{Ut} 
	\begin{split}
	\frac{1}{2}&\, \frac{d}{dt}\, U^2 (t) = - a_i \,U^2(t) - (a_i - a_j) u_j (t) U(t) + \sum_{\ell =1}^m \,\left(w_{i\ell} - w_{j\ell}\right) f_\ell (u_\ell)\, U(t)   \\[2pt]
	&\,  + k (\vp_i (\rho) - \vp_j (\rho))u_i(t) U(t) + k \vp_j (\rho) (u_i(t) - u_j(t))U(t)  \\[2pt]
	&\,+ (J_i - J_j) U(t) - PU^2 (t) \, \sum_{\ell=1}^m \, \Gamma (u_\ell (t))  \\[2pt]
	\leq & - a U^2(t) + a^* |u_j(t) U(t)| + m W^* \gb |U(t)| + k [( 1 - \eta_i \rho^2) - (1 - \eta_j \rho^2)] u_i(t) U(t) \\[2pt]
	&\, + k (1 - \eta_j \rho^2(t)) (u_i(t) - u_j(t)) U(t) + J^* |U(t)| - PU^2 (t) \,\sum_{\ell=1}^m \,\Gamma (u_\ell (t))  \\[2pt]
	= &\, - (a - k)\,U^2(t) + (m W^* \gb + J^*)|U(t)| + a^* |u_j(t)| |U(t)| + k\, (\eta_j - \eta_i) \rho^2(t) u_i(t) U(t)  \\[2pt]
	&\,- k\,\eta_j \,\rho^2(t) U^2 (t) - PU^2 (t)\, \sum_{\ell=1}^m \, \Gamma (u_\ell(t))   \\[2pt]
	\leq &\,- (a - k)\,U^2(t) + (m W^* \gb + J^*)|U(t)| + a^* |u_j(t)| |U(t)|  + k\, \eta^* \rho^2(t) |u_i(t)| |U(t)|  \\[2pt]
	&\, - PU^2 (t) \,\sum_{\ell=1}^m \, \Gamma (u_\ell(t)), \quad \text{for} \;\; t > 0.
	\end{split}
\eeq 

Step 2. Now we treat the third term and the fourth term on the rightmost side of the differential inequality \eqref{Ut}. According to Theorem \ref{T2} and the finite-time bounding estimate \eqref{lsp}-\eqref{TB}, we have
$$
	 \limsup_{t \to \infty} \left[ \sum_{i=1}^m |u_i (t)|^2 + |\rho (t)|^2 \right] < Q, 
$$
where $Q > 0$ is a uniform constant given in \eqref{K}. Thus for any given initial state $g^0 \in \mathbb{R}^{m+1}$ there exists a finite time $T(g^0) \geq 0$, which may depend on $g^0$, such that the solution $g(t; g^0) = (u_1(t), \cdots, u_m(t), \rho(t))$ is ultimately bounded:
\beq \bl{bd}
	\sum_{i = 1}^m |u_i (t)|^2  + |\rho (t)|^2 < Q, \quad \text{for} \;\; t > T(g^0). 
\eeq 
Therefore, we have
\beq \bl{Qd}
	 a^* |u_j(t)| < a^* Q^{1/2} \;\;  \text{and} \; \; \rho^2(t) |u_i(t)| < Q^{3/2}, \;\; \text{for} \;\; t > T(g^0), \;\; 1 \leq i, j \leq m.
\eeq 
Substitute \eqref{Qd} in two terms  $a^* |u_j(t)| |U(t)|$ and $k\, \eta^* \rho^2(t) |u_i(t)| |U(t)|$ on the rightmost side of \eqref{Ut}. We see that, for any given initial state $g^0$, the solution $U(t) = U_{ij}(t)$ of the differencing equation \eqref{deq} satisfies the following differential inequality,
\beq \bl{Ub}
	\begin{split}
	&\frac{1}{2} \,\frac{d}{dt}\, U^2(t) + PU^2 (t)\, \sum_{\ell=1}^m \Gamma (u_\ell(t))   \\
	\leq - (a - k)\, U^2 (t) &\,+ (m W^* \gb + J^*)\, |U(t)| + (a^* Q^{1/2} + k \eta^* Q^{3/2}) |U(t)|   \\[5pt]
	= - (a - k)\, U^2 (t) &\,+ (m W^* \gb + a^* Q^{1/2} + k \eta^* Q^{3/2} + J^*) |U(t)|,  \;  \text{for} \; t > T(g^0).
	\end{split}
\eeq

Next we handle the term $PU^2 (t) \sum_{\ell=1}^m \Gamma (u_\ell(t))$ in \eqref{Ub}, which can be called the weak interneuron coupling \cite{YT}. Since \eqref{bd} implies that  $ |u_\ell (t)| \leq \sqrt{Q}$ for $t > T(g^0)$ and for all $1 \leq \ell \leq m$, we have
\beq \bl{Gb}
	\sum_{\ell = 1}^m \Gamma (u_\ell (t)) = \sum_{\ell = 1}^m \frac{1}{1 + \exp [- r (u_\ell (t) - V)]} \geq \frac{m}{1 + \exp \left\{r (\sqrt{Q} + |V|)\right\}}
\eeq 
for $t > T(g^0)$. Substitute \eqref{Gb} in the inequality \eqref{Ub}. Then we get
\beq \bl{UP}
	\begin{split}
	&\,\frac{d}{dt} U^2(t) + 2 \left[ (a - k) + \frac{m P }{1 + \exp \left\{r (\sqrt{Q} + |V|)\right\}} \right] U^2 (t)  \\[4pt]
	\leq &\, 2 \left(m W^* \gb  + a^* Q^{1/2} + k \eta^* Q^{3/2} + J^* \right) |U(t)|, \quad  \text{for} \;\; t > T(g^0).
	\end{split}
\eeq

Step 3. Finally we have to tackle the nonlinear Gronwall inequality \eqref{UP}. Denote by 
\beq \bl{mu}
	 \mu =  (a - k) + \frac{m P }{1 + \exp \left\{r (\sqrt{Q} + |V|)\right\}},
\eeq
Then \eqref{UP} leads to the following differential inequality
\begin{equation*}
	\begin{split}
	&\frac{d}{dt} \,U^2(t) + 2\mu \,U^2(t) \leq 2 \left(m W^* \gb + a^* Q^{1/2} + k \eta^* Q^{3/2} + J^* \right) |U(t)| \\
	\leq &\, \mu \,U^2(t) + \frac{1}{\mu} \left(m W^* \gb + a^* Q^{1/2} + k \eta^* Q^{3/2} + J^* \right)^2,  \quad t > T(g^0).
	\end{split}
\end{equation*}
Hence it follows that
\beq \bl{gw}
	\begin{split}
	&\frac{d}{dt} \,U^2(t) + \mu \,U^2(t) \leq \frac{\left(m W^* \gb + a^* Q^{1/2} + k \eta^* Q^{3/2} + J^* \right)^2}{(a - k) + \frac{m P }{1 + \exp \left\{r (\sqrt{Q} + |V|)\right\}}} \\
        = &\,\frac{\left(m W^* \gb + a^* Q^{1/2} + k \eta^* Q^{3/2} + J^* \right)^2 \left(1 + \exp \left\{r (\sqrt{Q} + |V|)\right\}\right)}{(a - k)\left(1 + \exp \left\{r (\sqrt{Q} + |V|)\right\}\right) + m P}, \;\,  t > T(g^0).
	\end{split}
\eeq
Thus for any prescribed gap $\ve > 0$, if the threshold condition $P > P^*(\ve)$ is satisfied, then the gap function $U(t) = U_{ij}(t) = u_i(t) - u_j(t) $ for any two neurons $\mathcal{N}_i$ and $\mathcal{N}_j$ satisfies 
\beq \bl{Gw}
	\begin{split} 
	U^2(t) &\,\leq e^{- \mu (t - T(g^0))} U^2 (T(g^0))   \\[3pt]
	&\, + \frac{1}{\mu} \left[\frac{(m W^* \gb + a^* Q^{1/2} + k \eta^* Q^{3/2} + J^*)^2 (1 + \exp \{r (\sqrt{Q} + |V|)\})}{(a - k)(1 + \exp \left\{r (\sqrt{Q} + |V|)\right\})+ mP}\right]  \\
	&\, = e^{- \mu (t - T(g^0))} U^2 (T(g^0))   \\[3pt]
	&\, + \left[\frac{(m W^* \gb + a^* Q^{1/2} + k \eta^* Q^{3/2} + J^*) (1 + \exp \{r (\sqrt{Q} + |V|)\})}{(a - k)(1 + \exp \left\{r (\sqrt{Q} + |V|)\right\})+ mP}\right]^2 ,
	\end{split}
\eeq
for $t > T(g^0), 1 \leq i \neq j \leq m$, and for any initial state $g^0$. 

For any prescribed gap $\ve > 0$, if the threshold condition $P > P^*(\ve)$ shown by \eqref{Pve} in this theorem is satisfied, then the inequality \eqref{Gw} shows that
\beq \bl{ve}
	\begin{split}
	& \limsup_{t \to \infty} |u_i (t) - u_j(t)| = \limsup_{t \to \infty} |U(t)|   \\[3pt]
	\leq &\, \frac{ (m W^* \gb + a^* Q^{1/2} + k \eta^* Q^{3/2} + J^*)(1 + \exp \{r (\sqrt{Q} + |V|)\})}{(a - k)(1 + \exp \left\{r (\sqrt{Q} + |V|)\right\})+ mP}   \\
	< &\,\frac{1}{mP^*} (m W^* \gb + a^* Q^{1/2} + k \eta^* Q^{3/2} + J^*)(1 + \exp \{r (\sqrt{Q} + |V|)\}) = \ve
	\end{split}
\eeq
for any $1 \leq i \neq j \leq m$ and any initial state $g^0$. \eqref{ve} means that we have shown 
$$
	\deg_s (\mathcal{NW}) = \sup_{g^0 \in Z} \left\{ \max_{1 \leq i < j \leq m} \left\{\limsup_{t \to \infty} \, |u_i (t) - u_j(t)| \right\}\right\} < \ve
$$
if the interneuron coupling strength exceeds the threshold, $P > P^*(\ve)$. Thus the statement \eqref{aas} and the exponential convergence rate $\mu$ provided in \eqref{rate} are proved. According to the definition \eqref{gap}, the proof is completed.
\end{proof}

\begin{corollary}
	If the weak synaptic coupling $ - Pu_i \, \sum_{j=1}^m \Gamma (u_j)$ in the model \eqref{Meq} is replaced by the linear synaptic coupling $- P \sum_{j=1}^m (u_i - u_j)$ or called strong network coupling, then the approximate synchronization stated in Theorem \ref{ThM} remains valid with the simpler threshold 
\beq \bl{wP}
	P^*(\ve) =  \frac{1}{m\, \ve} \left(m W^* \gb + a^* Q^{1/2} + k \eta^* Q^{3/2} + J^* \right)
\eeq
and the changed convergence rate $\mu = a - k + P$.
\end{corollary}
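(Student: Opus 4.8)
The plan is to mirror the proof of Theorem \ref{ThM} essentially verbatim, the only change being the single term that arises from the interneuron coupling. First I would recompute the differencing equation \eqref{deq} for the replacement coupling $-P\sum_{\ell=1}^m(u_i-u_\ell)$. The key algebraic observation is that, writing $S(t)=\sum_{\ell=1}^m u_\ell(t)$, one has $\sum_{\ell=1}^m(u_i-u_\ell)=mu_i-S$, so that subtracting the $i$- and $j$-equations collapses the shared sum $S$ and leaves the clean linear damping
\[
 -P\Big(\sum_{\ell=1}^m(u_i-u_\ell)-\sum_{\ell=1}^m(u_j-u_\ell)\Big)=-mP\,U_{ij}.
\]
Thus the nonlinear sigmoidal term $-PU_{ij}\sum_{\ell=1}^m\Gamma(u_\ell)$ in \eqref{deq} is replaced by the strictly linear term $-mP\,U_{ij}$, while every other term on the right-hand side is unchanged.

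Then I would repeat Step~1 of the proof of Theorem \ref{ThM}: multiply the new differencing equation by $U=U_{ij}$ and apply Assumption \eqref{Asp} exactly as in \eqref{Ut}. The estimates of the $a^*$-, $W^*$-, $\eta^*$-, and $J^*$-terms are identical, and the coupling now contributes the exact quantity $-mP\,U^2(t)$ in place of $-PU^2\sum_\ell\Gamma(u_\ell)$. Next, invoking the ultimate bound \eqref{bd}--\eqref{Qd} furnished by Theorem \ref{T2}, I would replace $a^*|u_j|$ by $a^*Q^{1/2}$ and $\rho^2|u_i|$ by $Q^{3/2}$ for $t>T(g^0)$ and collect the constant $N:=mW^*\gb+a^*Q^{1/2}+k\eta^*Q^{3/2}+J^*$, obtaining for $t>T(g^0)$
\[
 \frac{d}{dt}U^2(t)+2\big[(a-k)+mP\big]\,U^2(t)\leq 2N\,|U(t)|.
\]

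Finally I would carry out Step~3 as before: set $\mu=(a-k)+mP$, apply Young's inequality \eqref{Yg} in the form $2N|U|\leq\mu U^2+N^2/\mu$, and solve the resulting linear Gronwall inequality $\frac{d}{dt}U^2+\mu U^2\leq N^2/\mu$ to get $\limsup_{t\to\infty}|U(t)|\leq N/\mu=N/((a-k)+mP)$. Requiring this bound to lie below $\ve$ and discarding the harmless positive summand $(a-k)$ yields the sufficient threshold $mP>N/\ve$, i.e. $P>P^*(\ve)=\frac{1}{m\ve}N$, which is precisely \eqref{wP}; the convergence is exponential at rate $\mu=(a-k)+mP$.

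The point worth emphasizing --- and essentially the only conceptual content beyond recopying the earlier argument --- is that no genuine obstacle arises. The linear coupling produces the damping $-mP\,U^2$ \emph{directly}, so the sigmoidal lower bound \eqref{Gb}, which forced the factor $1+\exp\{r(\sqrt{Q}+|V|)\}$ into both the rate and the threshold of Theorem \ref{ThM}, is no longer needed and that exponential factor simply drops out. The only care required is to verify that the cancellation of the shared sum $S(t)$ is exact, so that the coupling term is genuinely linear in $U_{ij}$ rather than merely bounded below; this is what makes the threshold \eqref{wP} strictly simpler than \eqref{Pve}.
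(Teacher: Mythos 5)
Your treatment of the differencing equation is correct and is exactly the part the paper dismisses as ``simple recalculation'': the shared sum $S(t)=\sum_\ell u_\ell$ cancels, the coupling contributes $-mP\,U_{ij}$ exactly, and the Young/Gronwall argument of Step~3 of Theorem \ref{ThM} goes through with the exponential factor removed, yielding $\limsup_{t\to\infty}|U|\leq N/((a-k)+mP)<N/(mP)<\ve$ under \eqref{wP}. That part is fine. (One small discrepancy: your computation gives the rate $\mu=(a-k)+mP$, whereas the corollary states $\mu=a-k+P$; yours is the natural and sharper value, and the stated rate is at best a weaker valid lower bound, so be aware your conclusion does not literally match the statement as printed.)

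The genuine gap is earlier: you invoke ``the ultimate bound \eqref{bd}--\eqref{Qd} furnished by Theorem \ref{T2}'' for the modified model, but Theorems \ref{T1} and \ref{T2} were proved for the system with the sigmoidal coupling, where the key step in \eqref{ui} was that the coupling contributes the termwise nonnegative quantity $\sum_{i,j}Pu_i^2\,\Gamma(u_j)$, which can be discarded on the favorable side. After the replacement, the corresponding contribution is $\sum_{i=1}^m\sum_{j=1}^m P(u_i-u_j)u_i$, which is \emph{not} nonnegative term by term; one must observe the symmetrization identity
\begin{equation*}
\sum_{i=1}^m\sum_{j=1}^m P(u_i-u_j)u_i=\frac{1}{2}\,P\sum_{i=1}^m\sum_{j=1}^m (u_i-u_j)^2\;\geq\;0,
\end{equation*}
so that the dissipativity estimate, global existence, and the absorbing ball with the \emph{same} constant $Q$ all carry over to the new model. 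Since $Q$ appears explicitly in the threshold \eqref{wP}, this verification is not optional, and it is in fact the only computation the paper's own proof writes out. Your proposal, by contrast, carefully redoes the routine part and silently assumes the part that actually requires checking.
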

\begin{proof}
	The linear network coupling $- P \sum_{j=1}^m (u_i - u_j)$ will only cause the following change of the last term on the right-hand side of the first inequality in \eqref{ui},
\beq \bl{lcp}
	\begin{split}
	&\, C_1 \sum_{i=1}^m \sum_{j=1}^m P(u_i - u_j)u_i = \sum_{1 \leq i < j \leq m} C_1 P \left[(u_i - u_j) u_i + (u_j - u_i) u_j\right]   \\
	= &\, \sum_{1 \leq i < j \leq m} C_1 P(u_i - u_j)^2 = \frac{1}{2}\, C_1 P \sum_{i=1}^m \sum_{j=1}^m (u_i - u_j)^2.
	\end{split}
\eeq
The rest of proof can be verified through the steps by simple recalculation.
\end{proof}

\section{\textbf{Extension to Memristive Hopfield- Hebbian Neural Networks}}

In this section we shall extend the study of approximate synchronization to memristive Hopfield neural networks with Hebbian learning rules. 

There have been growing interests in recent years to adopt biologically inspired learning rules and algorithms to train artificial neural networks, especially for improving the efficiency and accuracy related to the backpropagation process. Hebbian learning rules \cite{H, VB} update the synaptic weights in neural networks by dynamical equations and have demonstrated good performance in unsupervised machine learning tasks on certain datasets like MNIST and CIFAR for training in image recognition and classification \cite{PC, VFR}. 

Here we consider a memristive Hopfield-Hebbian neural network composed of $m$ neuron nodes, denoted by $\mathcal{NWH}$, and governed by the following model:
\beq \bl{Heq}
	\begin{split}
	\frac{d u_i}{d t} & = - a_i u_i + \sum_{j =1}^m w_{ij}  f_j (u_j) + k_i\, \psi_i (\rho) u_i + J_i - P \sum_{j=1}^m (u_i - u_j), \;\; 1 \leq i \leq m,  \\
	\frac{d w_{ij}}{d t} & = - c_{ij}\, w_{ij} + \gl_{ij} \, f_i (u_i) f_j (u_j), \quad  1 \leq i, j \leq m,  \\[4pt]
	\frac{d \rho}{d t} & = \sum_{i=1}^m \ga_i u_i - b \rho,  \quad  t > 0.
	\end{split} 
\eeq
The $u_i$-equations and the $\rho$ equation here in \eqref{Heq} are similar to \eqref{Meq} but the synaptic coupling weights $w_{ij}(t)$ in this model is dynamically updated and governed by the $w_{ij}$ equation, which is called Hebbian learning rule. The case with all coefficients $\gl_{ij} > 0$ is called Hebbian learning and the case with all $\gl_{ij} < 0$ called anti-Hebbian learning. The initial conditions for the system \eqref{Heq} are denoted by $u_i(0) = u_i^0, \, w_{ij}(0) = w_{ij}^0, \, \rho (0) = \rho^0$. It can be denoted as a vector $g^0 = (u_i^0, w_{ij}^0, \rho^0)$. The state space of this memristive Hopfield-Hebbian neural network turns out to be $\mathbb{R}^{m(1+m) +1}$.

Several remarks can be made in regard to the versatility and features about this memristive Hopfield-Hebbian neural network model \eqref{Heq}, ncluding the model \eqref{Meq}.

Remark 1. The activation functions $\{f_i(x): 1 \leq i \leq m\}$ in this model \eqref{Heq} can well be heterogeneous for different neurons in one neural network. Typical activation functions \cite{OC} are continuous differentiable, monotone, and bounded functions except a few like Softplus $Sp (x) = \ln \, (1 + e^x)  < x, ReLU (x)$ and $GeLU (x) = (x/2)[1 + Erf (x/\sqrt{2})]$, which are nevertheless quasilinear. Since the transmembrane potential of biological neurons are uniformly bounded from the resting potentials typically $-60$ \emph{mv} up to the bursting limit about $+40$ \emph{mv}, one can still plausibly assume these quasilinear activation functions to be uniformly bounded.

Remark 2. There are several typical memristor window functions: sigmoidal function $\tanh (s)$, linear function $1 - \eta | s |$, quadratic function $1 - \eta s^2$, Jogelker memristor $1 - (2s -1)^{2q}$, and Strukov-Williams memristor $s (\eta - s)$, where $\eta > 0$ is a normalized parameter and $q$ is a positive integer. Here in this model \eqref{Heq} $\psi_i(\rho)$ can take only one these types \cite{VM} but maybe different parameters for different neuron nodes.  

Remark 3. Concerning the synaptic weights dynamics governed by Hebbian rules in the $w_{ij}$-equations of \eqref{Heq}, we emphasize that the Hebbian equations incarnate the key factors of locality, synaptic dissipativity, cooperative or competitive tendency possessed in biological neuronal networks. The coefficient matrix $(\gl_{ij})_{m x m}$ in this model describes the topological connection in the network and likely to be a sparse matrix reflecting the multi-layer structure of deep learning.

We make the following assumptions on this model \eqref{Heq}: 
\beq \bl{Ast}
	\begin{split}
	&a =  = \min \{a_i: 1 \leq i \leq m\}  > \frac{1}{2}\, k \,\eta^2, \quad  c_{ij} > 0,\; \; \gl_{ij} \in \mathbb{R}, \; 1 \leq i, j \leq m, \\[2pt]
        \eta = &\, \min \{\eta_i: 1 \leq i \leq m\} > 0, \quad  |f_i(s)| \leq \gb, \quad \psi_i (s) = s(\eta_i - s), \; 1 \leq i \leq m.
	\end{split}
\eeq
The descriptions on the rest parameters are same as in Section 1. We shall prove the extension result of approximate synchronization for the memristive Hopfield-Hebbian neural network $\mathcal{NWH}$. The involved parameters in the next result are listed below. 
\begin{gather*}
	a^* = \max \{|a_i - a_j|: 1 \leq i, j \leq m \}, \quad W_0^* = \max \{ |w_{i\ell} (0) - w_{j\ell} (0)|: 1 \leq i, j, \,\ell \leq m\},   \\[2pt]
	\eta^* = \max \{|\eta_i - \eta_j|: 1 \leq i, j \leq m\},  \quad   J^* = \max \{|J_i - J_j|: 1 \leq i, j \leq m\}.
\end{gather*}

\begin{theorem} \bl{TMH}
	The memristive Hopfield-Hebbian neural network $\mathcal{NWH}$ presented by the model \eqref{Heq} with Assumption \eqref{Ast} is approximate exponenially synchronizable. For any prescribed gap $\ve > 0$, there exists a threshold $P^*$ of network coupling strength,
\beq \bl{Ph}
	P^*(\ve) = \frac{1}{m\, \ve} \left[ a^* G^{1/2} + k \eta^* G^{3/2} + 2 m \gb \sqrt{1 + \frac{\gl^2 \gb^4}{c^2}} + J^* \right],
\eeq 
where $G$ is shown in \eqref{lsp}, such that if the interneuron coupling strength $P > P^*(\ve)$, then the neural network $\mathcal{NWH}$ is approximately synchronized, 
\beq \bl{asd}
	\deg_s \,(\mathcal{NWH}) = \sup_{g^0 \in Z} \left\{ \max_{1 \leq i < j \leq m} \left\{\limsup_{t \to \infty} \, |u_i (t) - u_j(t)| \right\}\right\} < \ve
\eeq
at a uniform exponential convergence rate

\beq \bl{Rte}
	\xi (P) = a - \frac{1}{2}\, k\, \eta + mP.
\eeq
\end{theorem}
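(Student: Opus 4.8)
The plan is to reproduce the three–stage architecture of the proof of Theorem \ref{ThM} together with the telescoping device of its Corollary, and to graft onto it one genuinely new estimate controlling the time–varying Hebbian weights $w_{ij}(t)$. First I would re–run the dissipativity analysis of Theorems \ref{T1} and \ref{T2} for the enlarged system \eqref{Heq} on the state space $\mR^{m(1+m)+1}$, so as to produce the uniform ultimate bound $G$ named in \eqref{Ph} (playing for \eqref{Heq} the role that $Q$ from \eqref{lsp} plays for \eqref{Meq}) and the associated absorbing ball.

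The decisive new observation is that the uniform bound $|f_i|\le\gb$ decouples the weight block from the potentials: each scalar equation $\frac{d}{dt} w_{ij} = -c_{ij} w_{ij} + \gl_{ij} f_i(u_i) f_j(u_j)$ has forcing bounded by $\gl\gb^2$ with $\gl=\max_{i,j}|\gl_{ij}|$, so multiplying by $w_{ij}$ and applying Young's inequality \eqref{Yg} gives $\frac{d}{dt} w_{ij}^2 \le -c_{ij} w_{ij}^2 + \gl^2\gb^4/c_{ij}$ and hence, with $c=\min_{i,j}c_{ij}$ and the same strict–absorption buffer used for $Q$ in \eqref{K}, the ultimate estimate $\limsup_{t\to\infty}|w_{ij}(t)| \le \sqrt{1+\gl^2\gb^4/c^2}$. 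This single constant is exactly what feeds the weight contribution of the threshold \eqref{Ph}: in the differencing equation the weight term is estimated asymptotically by $\big|\sum_\ell (w_{i\ell}-w_{j\ell})f_\ell(u_\ell)\big| \le \gb\sum_\ell(|w_{i\ell}|+|w_{j\ell}|) \le 2m\gb\sqrt{1+\gl^2\gb^4/c^2}$, so the initial–data quantity $W_0^*$ enters only the transient and is washed out in the $\limsup$. Feeding this weight bound and the window control $\psi_i(\rho)u_i^2 \le \tfrac14\eta_i^2 u_i^2$ (from $\psi_i(s)=s(\eta_i-s)$), together with Assumption \eqref{Ast}, into the Lyapunov functional $\sum_i C_1 u_i^2 + \sum_{i,j} w_{ij}^2 + \rho^2$ reproduces a Gronwall inequality of the form \eqref{GZ} and hence the absorbing set of radius $G$.

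Second I would write the interneuron differencing equation for $U=U_{ij}=u_i-u_j$, multiply by $U$, and invoke Assumption \eqref{Ast}. The linear (strong) coupling $-P\sum_\ell(u_i-u_\ell)$ telescopes exactly as in \eqref{lcp} into the clean dissipative term $-PmU$, which is why the rate \eqref{Rte} carries $+mP$ instead of the exponential–denominator factor of \eqref{rate}. Splitting the memristive part as in Step 1 of Theorem \ref{ThM}, the heterogeneity piece $k(\psi_i(\rho)-\psi_j(\rho))u_i$ is bounded through the ultimate estimates \eqref{bd}–\eqref{Qd} (with $G$ in place of $Q$) by the heterogeneity constant appearing in \eqref{Ph}, while the diagonal piece $k\psi_j(\rho)U^2$ is absorbed by a square completion into $-aU^2$ to yield the net decay coefficient $\xi(P)$ of \eqref{Rte}, whose positivity is guaranteed precisely by the hypothesis $a>\tfrac12 k\eta^2$ in \eqref{Ast}. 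Collecting all terms gives, for $t$ past the entry time $T(g^0)$, an inequality of exactly the shape \eqref{UP},
\beq
	\frac{d}{dt}\,U^2 + 2\,\xi(P)\,U^2 \le 2\left(a^* G^{1/2} + k\eta^* G^{3/2} + 2m\gb\sqrt{1+\frac{\gl^2\gb^4}{c^2}} + J^*\right)|U|,
\eeq
with $\xi(P)$ as in \eqref{Rte}. The final step is the nonlinear–Gronwall manipulation of Step 3 of Theorem \ref{ThM}: split the right side by Young's inequality into $\xi U^2 + \xi^{-1}(\cdots)^2$, solve the resulting linear inequality, and read off $\limsup_{t\to\infty}|U_{ij}| \le \xi(P)^{-1}(\cdots)$, which is $<\ve$ precisely when $P>P^*(\ve)$ given by \eqref{Ph}.

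I expect the main obstacle to be the dissipativity step rather than the synchronization estimate: one must check that enlarging the phase space by the $m^2$ weight equations does not destroy global existence and boundedness, i.e. that the bilinear feedback $w_{ij}f_j(u_j)$ into the potential equations cannot be amplified by the weights themselves. Uniform boundedness of the activations is exactly what rescues this — it renders the Hebbian block an independently dissipative linear system with a fixed ultimate bound, after which the potential and memristor components inherit dissipativity as in Theorem \ref{T1}. A secondary point of care is the sign bookkeeping for the Strukov–Williams window $\psi_i(s)=s(\eta_i-s)$: its linear–in–$\rho$ behaviour, in contrast with the quadratic $1-\eta_i s^2$ of \eqref{Asp}, is what forces the square completion producing the rate shift in \eqref{Rte} and the standing hypothesis $a>\tfrac12 k\eta^2$.
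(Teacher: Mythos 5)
Your proposal is correct and follows essentially the same route as the paper's proof: first decouple and ultimately bound the Hebbian weight block using $|f_i|\le\gb$ (yielding the constant $\sqrt{1+\gl^2\gb^4/c^2}$ that enters the threshold), then establish the absorbing bound $G$ for the potential--memristor components, and finally run the differencing-equation estimate with the telescoped linear coupling $-mPU$ followed by Young's inequality and Gronwall. Your minor variations (a single Lyapunov functional including $\sum_{i,j}w_{ij}^2$ rather than the paper's sequential treatment, and the sharper window bound $\psi_i(\rho)\le\tfrac14\eta_i^2$ in place of the paper's $\tfrac12\eta^2$) do not change the argument in any essential way.
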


\begin{proof}
It will go through two steps. The first step aims to outcropping a uniform ultimate bound for all the solutions of this Hopfield-Hebbian neural network model. The second step will carry out the analysis of the interneuron differencing equations for the neuron gap functions to reach the result of approximate exponential synchronization.

	Step 1. First we want to estimate an ultimate bound for all solutions of this neural network model. Multiply the $w_{ij}$-equation in \eqref{Heq} by $w_{ij}(t)$ and by \eqref{Ast} to get
\beq \bl{wd}
	\begin{split}
	\frac{1}{2} \,\frac{d}{dt}\, w^2_{ij}(t) & = - c_{ij}\, w^2_{ij}(t) + \gl_{ij} \,w_{ij}(t) f_i(u_i(t)) f_j(u_j(t))   \\
	& \leq - c \,w^2_{ij}(t) + \gl \,\gb^2\, w_{ij}(t) \leq - \frac{1}{2}\, c\, w^2_{ij}(t) + \frac{1}{2 c}\, \gl^2 \gb^4,
	\end{split}
\eeq
where $\gl = \max \{|\gl_{ij}| \}$ and $c = \min \{c_{ij}: 1 \leq i, j \leq m\} > 0$.  It follows that
\beq \bl{wt}
	w^2_{ij}(t) \leq e^{- c\, t} \,w^2_{ij}(0) + \frac{\gl^2 \gb^4}{c^2} \leq w^2_{ij}(0) + \frac{\gl^2 \gb^4}{c^2} \quad t > 0.
\eeq
Sum up the $u_i$-equations multiplied by $C_3 u_i(t)$ and the $\rho$-equation multiplied by $\rho(t)$ in \eqref{ur}. Without loss of generality, one can take all the initial weights $w_{ij}(0) \in \{0, 1\}$ depending on the connectivity between the two neuron nodes. Note that $\psi_i(\rho) = \rho (\eta_i  - \rho) \leq \frac{1}{2} \eta^2 - \frac{1}{2} \rho^2$ and by interneuron coupling treatment \eqref{lcp}, we have
\beq \bl{ur}
	\begin{split}
	& \frac{1}{2} \frac{d}{dt} \left( \sum_{i = 1}^m C_3 u^2_i + \rho^2 \right) \leq \frac{1}{2}\, \frac{d}{dt}\, \left(\sum_{i = 1}^m C_3 u^2_i + \rho^2 \right) + \frac{1}{2}\, C_3 P \,\sum_{i=1}^m \sum_{j=1}^m \, (u_i - u_j)^2  \\
	= &\,\sum_{i=1}^m C_3 \left[- a_i u^2_i  + \sum_{j =1}^m w_{ij}(t) f_j (u_j) u_i + k \psi_i (\rho) u_i^2 +  J_i u_i \right] + \sum_{i=1}^m \ga_i \,u_i \rho - b \rho^2   \\
	\leq &\,\sum_{i=1}^m C_3 \left[- a u_i^2(t) + m \left(w^2_{ij}(0) + \frac{\gl^2 \gb^4}{c^2} \right)^{1/2} \gb |u_i (t)| \right]   \\
	&\, + \sum_{i=1}^m C_3 \left[ \frac{1}{2} k \left(\eta^2 - \rho^2(t)\right) u_i^2(t) + J |u_i(t)| \right] + \sum_{i=1}^m \frac{\ga_i^2}{2b} u_i^2 (t) - \frac{b}{2} \rho^2(t)  \\
	\leq &\, - \sum_{i=1}^m C_3\left(a - \frac{1}{2} k \eta^2\right) u_i^2(t) + \sum_{i=1}^m \frac{\ga_i^2}{2b} u_i^2 (t) - \frac{b}{2} \rho^2(t)  \\
	&\, + \sum_{i=1}^m C_3 \left( J + m \gb \sqrt{1 + \frac{\gl^2 \gb^4}{c^2}}\, \right) |u_i (t)| \\
	\leq &\, - \frac{1}{2} \sum_{i=1}^m C_3\left(a - \frac{1}{2} k \eta^2\right) u_i^2(t) + \frac{1}{2} \left[ \sum_{i=1}^m \frac{\ga_i^2}{b} u_i^2 (t) - b \rho^2(t)\right]  \\
	&\, + \frac{1}{2} \,C_3 \left( J + m \gb \sqrt{1 + \frac{\gl^2 \gb^4}{c^2}} \,\right)^2 \left(a - \frac{1}{2} k \eta^2\right)^{-1} ,
	\end{split}
\eeq
where $C_3 > 0$ is a scaling constant. By the Assumption \eqref{Ast}, we choose the positive scaling constant
\beq \bl{C3}
	C_3 = \frac{\frac{m \ga^2_i}{b} + \frac{1}{2} b}{a - \frac{1}{2} k \eta^2}  \; \quad \text{so that} \quad C_3\left(a - \frac{1}{2} k \eta^2\right) - \frac{m \ga^2_i}{b} = \frac{b}{2}\, .
\eeq
Substitute \eqref{C3} in \eqref{ur} to obtain
\beq \bl{UR}
	\frac{d}{dt} \left(\sum_{i = 1}^m C_3 u_i^2(t) + \rho^2(t)\right) + b \left(\sum_{i=1}^m u_i^2(t) + \rho^2(t) \right) \leq C_4, \quad  \text{for} \;\; t > 0,
\eeq
where 
\beq \bl{C4}
        C_4 = \left(\frac{m \ga^2_i}{b} + \frac{b}{2} \right) \left( J + m \gb \sqrt{1 + \frac{\gl^2 \gb^4}{c^2}} \,\right)^2 \left(a - \frac{1}{2} k \eta^2\right)^{- 2}	
\eeq
is a uniform constant independent of any initial state. The differential inequality \eqref{UR} can be written as
\beq \bl{ub}
	\frac{d}{dt} \left(\sum_{i = 1}^m C_3 u_i^2(t) + \rho^2(t)\right) + \gs \left(\sum_{i=1}^m C_3 u_i^2(t) + \rho^2(t) \right) \leq C_4, \quad \text{for} \;\; t > 0,
\eeq
where 
\beq \bl{dr}
	\gs = b \, \min \left\{\frac{1}{C_3},\, 1\right\}.
\eeq
We can solve this differential Gronwall inequality to confirm 
$$
	\min \{C_3, 1\} \left[\sum_{i = 1}^m u_i^2(t) + \rho^2(t)\right] \leq \sum_{i = 1}^m C_3 u_i^2(t) + \rho^2(t) \leq e^{- \gs t} \left[\sum_{i=1}^m C_3 u_i^2(0) + \rho^2(0) \right] + \frac{C_4}{\gs}
$$ 
which shows that 
\beq  \bl{lsp}
	\limsup_{t \to \infty} \left( \sum_{i = 1}^m u_i^2(t) + \rho^2(t) \right) < G = 1 + \frac{C_4}{\gs\, \min \{C_3, 1\}}. 
\eeq 
Here the $C_3, C_4$ and $G$ are uniform positive constants independent of any initial state. 

Step 2. Next we shall carry out the analysis of the dynamic gap functions $V_{ij} (t) = u_i(t) - u_j(t)$ among the neurons, which satisfy the following differencing equations,
\beq \bl{Veq}
	\begin{split}
	\frac{dV_{ij}}{dt} = &\, - (a_i V_{ij} + (a_i - a_j) u_j ) + \sum_{\ell =1}^m \, \left(w_{i\ell} - w_{j\ell}\right) f_\ell (u_\ell)   \\
	&\, + k\, (\psi_i (\rho) u_i - \psi_j (\rho) u_j)  + J_i - J_j - mPV_{ij}, \;\;  t > 0,
	\end{split}
\eeq
where the last term comes from  
$$
	 P \left( \sum_{\ell =1}^m (u_i - u_\ell) - \sum_{\ell=1}^m (u_j - u_\ell)\right) = P\, \sum_{\ell = 1}^m (u_i - u_j) = mPV_{ij}.
$$

For any given $1 \leq i \neq j \leq m$, multiply the $V_{ij}$-equation by $V_{ij}(t)$. In the sequel we shall denote $V_{ij}(t)$ by $V(t)$ for notational simplicity. By Assumption \eqref{Ast} and \eqref{wt} with the setting $w_{ij}(0) \in \{0, 1\}$, we obtain

\begin{equation*}
	\begin{split}
	&\frac{1}{2}\, \frac{d}{dt} V^2 (t) = - a_i \,V^2 (t) - (a_i - a_j) u_j (t) V(t) + \sum_{\ell =1}^m \,\left(w_{i\ell}(t) - w_{j\ell}(t) \right) f_\ell (u_\ell) V(t)   \\[5pt]
	& + k (\psi_i (\rho) - \psi_j (\rho))u_i(t) V(t) + k \psi_j (\rho) V^2(t) + (J_i - J_j) V(t) - mPV^2 (t)  \\[5pt]
	\leq & - a V^2(t) + a^* |u_j(t)| |V(t)| + 2 m \gb \sqrt{1 + \frac{\gl^2 \gb^4}{c^2}} \,|V(t)|   \\[10pt]
	&\, + k (\eta_i - \eta_j) \rho(t) |u_i(t)| |V(t)| + k \left(\rho(t) \eta_j - \rho^2 (t) \right) V^2(t).+ J^* |V(t)| - m PV^2 (t)  \\[6pt]
	\leq & - a V^2(t) + a^* |u_j(t)| |V(t)| + 2 m \gb \sqrt{1 + \frac{\gl^2 \gb^4}{c^2}} \,|V(t)|   \\[6pt]
	&\, + k \eta^* |\rho(t)| |u_i(t)| |V(t)| + \frac{1}{2}\, k \,(\eta - \rho^2 (t)) V^2(t).+ J^* |V(t)| - m PV^2 (t)  \\[2pt]
	\leq &\, - \left(a - \frac{1}{2} k \eta + mP \right)\,V^2(t)   \\
	&\, + \left(a^* |u_j(t)| + k \eta^* |\rho(t)| |u_i(t)| + 2 m \gb \sqrt{1 + \frac{\gl^2 \gb^4}{c^2}} + J^* \right) |V(t)|, \;\;  \text{for} \;\; t > 0.
	\end{split}
\end{equation*}
According to the asymptotic estimate \eqref{lsp}, for any given initial state $g^0 = (u_i^0, w_{ij}^0, \rho^0)$ of the neural network, there is a finite time $T(g^0) > 0$ such that 
$$
	 \sum_{i = 1}^m u_i^2(t) + \rho^2(t) < G, \quad \text{for} \;\; t > T_{g^0}.
$$
Therefore the above shown differential inequality implies that for $t > T_{g^0}$,
\begin{equation*}
	\begin{split}
	& \frac{d}{dt} V^2 (t) + 2 \left(a - \frac{1}{2}\, k \eta + mP \right)V^2(t)    \\[8pt]
	\leq &\, 2 \left(a^* G^{1/2} + k \eta^* G^{3/2} + 2 m \gb \sqrt{1 + \frac{\gl^2 \gb^4}{c^2}} + J^* \right) |V(t)|   \\
	\leq &\,\left(a - \frac{1}{2} k \eta + mP \right)V^2(t) + \frac{\left(a^* G^{1/2} + k \eta^* G^{3/2} + 2 m \gb \sqrt{1 + \frac{\gl^2 \gb^4}{c^2}} + J^* \right)^2}{a - \frac{1}{2} k \eta + mP}.
	\end{split}
\end{equation*}
It follows that

\beq  \bl{VG}
	\begin{split}
	& \frac{d}{dt} V^2 (t) + \left(a - \frac{1}{2}k \eta + mP \right)V^2(t)    \\
	\leq &\, \left(a - \frac{1}{2}k \eta + mP \right)^{-1} \left[a^* G^{1/2} + k \eta^* G^{3/2} + 2 m \gb \sqrt{1 + \frac{\gl^2 \gb^4}{c^2}} + J^* \right]^2, 
	\end{split}
\eeq
for $t > T_{g^0}$. Denote by 
\beq \bl{aph}
	\xi = a - \frac{1}{2} k \eta + mP.
\eeq
Integration of the Gronwall inequality \eqref{VG} on the time interval $[T_{g^0}, t]$ shows that for any initial state $g^0$ and any pair of indices $1 \leq i \neq j \leq m$,
\beq \bl{Vt}
	V^2(t) \leq e^{\xi(t - T_{g^0})} V^2(T_{g^0}) + \left[\frac{a^* G^{1/2} + k \eta^* G^{3/2} + 2 m \gb \sqrt{1 + \frac{\gl^2 \gb^4}{c^2}} + J^*}{a - \frac{1}{2}k \eta + mP}\right]^2, \;\;  t > T_{g^0}.
\eeq 
For any prescribed small gap $\ve > 0$, if the threshold condition $P > P^*(\ve)$ shown in \eqref{Ph} is satisfied, then from \eqref{Vt} we can conclude that for any $1 \leq i \neq j \leq m$,
\beq \bl{Vsp}
	\begin{split}
	&\limsup_{t \to \infty} |u_i(t) - u_j(t)| = \limsup_{t \to \infty} |V_{ij}(t)| = \limsup_{t \to \infty} |V(t)|  \\[4pt]
	 \leq &\,\frac{a^* G^{1/2} + k \eta^* G^{3/2} + 2 m \gb \sqrt{1 + \frac{\gl^2 \gb^4}{c^2}} + J^*}{a - \frac{1}{2}k \eta + mP}  \\
	 < &\, \frac{1}{m P^*} \left( a^* G^{1/2} + k \eta^* G^{3/2} + 2 m \gb \sqrt{1 + \frac{\gl^2 \gb^4}{c^2}} + J^* \right) = \ve .
	\end{split}
\eeq
Consequently \eqref{asd} is proved. The proof is completed.
\end{proof}

\textbf{Conclusions}

We summarize the contributions in this paper and outlook for further researches.

In view of the fact that complete synchronization to zero gap for artificial neural networks is theoretically unrealistic due to multiple factors of mismatched or time-varying synaptic  weights and heterogenous activation or coupling components, we proposed a quantitative concept of approximate synchronization for neural networks in Definition 1.1 of this paper. 

We introduced and investigated two new model. One is Hopfield neural networks with nonlinear memristor and weak interneuron coupling and another one is memristive Hopfield neural networks with weights dynamically updated by Hebbian learning rules, which have good performance in unsupervised data training. Taking the approach of scaled\emph{ a priori} estimates of solutions, for both models we proved the existence of an absorbing set with a sharp ultimate bound, which paves the way to exploration of asymptotic dynamics.

In the main result Theorem \ref{ThM} and the extension result Theorem \ref{TMH} of this paper we established a sufficient threshold condition to be satisfied by a single interneuron coupling strength coefficient for achieving approximate exponential synchronization of the neural network respectively for each model. The merit and the usefulness of these results lie in the explicit expression of the threshold and the exponential convergence rate, which are computable in terms of the original parameters in advance. Moreover, the asymptotic gap can be prescribed arbitrarily. 

The rigorously proved theoretical results along with the methodology in this work can be applied and generalized to many other types of artificial neural networks in unsupervised machine learning with mismatched parameters, heterogeneous activation devises, non-identical memristors, or complex synaptic couplings. 

Mathematical research on approximate synchronization and predicable asymptotic dynamics of artificial neural networks is meaningful in deep learning performance  but remains widely open, especially for  tackling the backpropagation operations with all kinds of models, datasets, and algorithms.

\end{document}